\documentclass[a4paper]{jpconf}

\usepackage{iopams,amsmath,amsthm}
\bibliographystyle{iopart-num}

\newtheorem{thm}{Theorem}

\newcommand{\C}{{\mathbb{C}}}

\newcommand{\N}{{\mathbb{N}}}

\begin{document}
\title{Substitution-based sequences with absolutely continuous diffraction}

\author{Lax Chan and Uwe Grimm}

\address{School of Mathematics and Statistics, The Open University, 
Milton Keynes MK7 6AA, UK}
\ead{lax.chan@open.ac.uk, uwe.grimm@open.ac.uk}

\begin{abstract}
  Modifying Rudin's original construction of the Rudin--Shapiro
  sequence, we derive a new substitution-based sequence
  with purely absolutely continuous diffraction spectrum.
\end{abstract}

\section{Introduction}

Substitution dynamical systems are widely used as toy models for
aperiodic order in one dimension~\cite{PF02}. The binary
Rudin--Shapiro sequence~\cite{{Rud59},{Sha51}} is a paradigm for a
substitution-based structure with (in its balanced weight case) purely
absolutely continuous diffraction spectrum; we refer to~\cite{BG13}
for details and background. Indeed, this deterministic sequence has
the stronger property that its two-point correlations vanish for
\emph{any} non-zero distance, as it would be the case for a random
sequence \cite{BG09}.  While currently only very few examples of
substitution-based sequences with this property are known, a
systematic generalisation in terms of Hadamard matrices \cite{Fra03}
allows the construction of one-dimensional as well as
higher-dimensional examples.

We start by reviewing Rudin's original construction~\cite{Rud59}, and
modify it to obtain a new example of a substitution-based structure
with absolutely continuous diffraction spectrum. Our approach differs
from the construction in~\cite{Fra03}. We then use recent work of
Bartlett~\cite{BG13} to investigate the properties of the new system
in more detail.

\section{Original construction of the Rudin--Shapiro sequence}

In 1958, Salem~\cite{Rud59} asked the following question: is it
possible to find a sequence $(\varepsilon_{n})_{n\in\N}\in\{\pm
1\}^{\N}$ such that there is a constant $C>0$ for which
\begin{equation}
\label{equation:0}
\sup_{\theta\in\mathbb{R}}\,\biggl|\sum_{n<N}\varepsilon_{n}
  \, e^{2\pi in\theta}\biggr|\, \leq\, C\sqrt{N}
\end{equation}
holds for any positive integer $N$? This is known as the `root $N$'
property, which implies absolutely continuous diffraction of the
sequence $(\varepsilon_{n})_{n\in\N}$ by an application of
\cite[Prop.~4.9]{MQ10}. Rudin~\cite{Rud59} and Shapiro~\cite{Sha51}
independently gave positive answers to the question by constructing
what is now known as the Rudin--Shapiro sequence. We briefly review
Rudin's original construction to obtain the substitution dynamical
system and its balanced weight version (with weights in $\pm 1$).

We start by defining polynomials $P_{k}(x)$ and $Q_{k}(x)$ of degree
$2^{k}$ for $k\in\mathbb{N}_{0}$ recursively by
\begin{equation}\label{equation:1}
\begin{split}
P_{k+1}(x)\, =\, P_{k}(x)+x^{2^{k}}Q_{k}(x),\\
Q_{k+1}(x)\, =\, P_{k}(x)-x^{2^{k}}Q_{k}(x),
\end{split}
\end{equation} 
with $P^{}_{0}(x)=Q^{}_{0}(x)=x$. Note that from
Eq.~\eqref{equation:1} it is clear that the first $2^k$ terms of
$P_{k+1}(x)$ and of $Q_{k+1}(x)$ coincide with those of $P_{k}(x)$,
and the remaining terms differ by a sign. By construction, $P_{k}(x)$
is of the form
\begin{equation}\label{equation:1b}
  P_{k}(x)\, =\,\sum_{n=1}^{2^{k}}\,\varepsilon_{n}\,x^n,
\end{equation}
so we can define a binary sequence
$(\varepsilon_{n})^{}_{n\in\mathbb{N}}\in\{\pm 1\}^{\mathbb{N}}$ from
the corresponding coefficients. This is the binary Rudin--Shapiro
sequence. For example,
$P_3(x)=x+x^{2}+x^{3}-x^{4}+x^{4}(x+x^{2}-x^{3}+x^{4})$, from which we
read off the sequence $111\overline{1}11\overline{1}1$ with
$\overline{1}=-1$.  The main ingredient in the proof of property
\eqref{equation:0} for this sequence is the parallelogram law,
\[
   |\alpha+\beta|^{2}+|\alpha-\beta|^{2}
   \, =\, 2|\alpha|^{2}+2|\beta|^{2},
\]
for $\alpha,\beta\in\C$; see~\cite{Rud59} for details. This can be
used to establish the bound \eqref{equation:0} on $P_{k}(e^{2\pi
  i\theta})$.

Often, the Rudin--Shapiro sequence is defined by a four-letter
substitution rule and a subsequent reduction map from four to two
letters. The underlying four-letter substitution on the alphabet
$\{A,B,C,D\}$ can be read off from the recursion \eqref{equation:1},
noting that the recursion implies the concatenation of the sequences
corresponding to $P_{k}$ and $Q_{k}$. Associating letters $A$ and $B$
to the coefficients in $P$ and $Q$ and the letters $C$ and $D$ to
those of $-Q$ and $-P$, respectively, this gives rise to the
four-letter substitution rule
\[
  \varrho\!:\quad A\mapsto AB, \quad B\mapsto AC, \quad C\mapsto DB, 
  \quad D\mapsto DC,
\]
which corresponds to the standard four-letter Rudin--Shapiro
subsitution.  Iterating the sequence on the initial seed $A$ gives
\[
   A\mapsto AB\mapsto ABAC\mapsto ABACABDB\mapsto \dots
\]
which, under the mapping $A\mapsto 1$, $B\mapsto 1$, $C\mapsto -1$,
$D\mapsto -1$ (which corresponds to the choice of signs above)
produces the binary Rudin--Shapiro sequence
$(\varepsilon_{n})^{}_{n\in\mathbb{N}}$.

\section{Generalising Rudin's construction}

Let us now modify Rudin's original argument by considering the
following system
\begin{equation}
\label{equation:2}
\begin{split}
  P_{k+1}(x)\, =\, P_{k}(x)+(-1)^{k}x^{2^{k}}Q_{k}(x),\\
  Q_{k+1}(x)\, =\, P_{k}(x)-(-1)^{k}x^{2^{k}}Q_{k}(x),
\end{split}
\end{equation}
starting again from $P^{}_{0}(x)=Q^{}_{0}(x)=x$. Since from now on we
shall only look at this set of recursion relations, we use the same
notation as above. What has changed in comparison with
Eq.~\eqref{equation:1} is that we swap the sign in the recursion
relation in each step. Since the sign does not affect the argument
used in Rudin's proof \cite{Rud59}, it is straightforward to show that
the parallelogram law can again be used to prove that the
corresponding \emph{new} sequence of coefficients
$(\varepsilon_{n})^{}_{n\in\mathbb{N}}$, defined as above via
Eq.~\eqref{equation:1b}, also satisfies the root $N$ property.

Can we read off the corresponding substitution rule as for the
Rudin--Shapiro case? Indeed this is possible, but it is slightly more
complicated, because the recursion relations in Eq.~\eqref{equation:2}
alternate. One way to deal with that is to look at two consecutive
steps together,
\begin{equation}
\label{equation:2b}
\begin{split}
  P_{k+2}(x)\, =\, P_{k}(x) + (-1)^{k}x^{2^{k}}Q_{k}(x) + 
                 (-1)^{k+1}x^{2\cdot 2^{k}} P_{k}(x)
                 + x^{3\cdot 2^{k}}Q_{k}(x),\\
  Q_{k+2}(x)\, =\, P_{k}(x) + (-1)^{k}x^{2^{k}}Q_{k}(x) - 
                 (-1)^{k+1}x^{2\cdot 2^{k}} P_{k}(x)
                 - x^{3\cdot 2^{k}}Q_{k}(x).
\end{split}
\end{equation}
Choosing $k$ to be even, say, and associating again four letters
$A,B,C,D$ to the sequences corresponding to $P,Q,-Q,-P$, we can read
off the substitution
\begin{equation}
\label{equation:3}
  \sigma\!:\quad 
  A\mapsto ABDB,\quad B\mapsto ABAC,\quad C\mapsto DCDB,\quad D\mapsto DCAC,
\end{equation} 
which is now a constant length substitution of length four, because we
used a double step of the recursion. Therefore Eq.~\eqref{equation:2b}
corresponds to concatenation of four sets of coefficients.  As before,
the binary sequence is obtained from iterating the substitution on the
initial letter $A$;
\begin{equation}\label{equation:3b}
   A\mapsto ABDB\mapsto ABDBABACDCACABAC\mapsto \cdots.
\end{equation}
By mapping $A$ and $B$ to $1$ and mapping $C$ and $D$ to $\overline{1}=-1$,
we obtain $11\overline{1}1111\overline{1}\overline{1}
\overline{1}1\overline{1}111\overline{1}\ldots$ as the initial part of
our new binary sequence.

Clearly, we can generate infinitely many such examples by changing the
signs in the original recursion relation \eqref{equation:1} in more
complicated ways. Any finite sequence of signs, when used
periodically, will give rise to a substitution-based system. However,
the length of the substitution will increase with the length of the
sign sequence. There neither appears to be an obvious relation between
the different Rudin--Shapiro type sequences obtained from this
construction, nor between these sequences and those derived from
Frank's construction \cite{Fra03}. In the latter case, the number of
letters in the alphabet increases with the size of the Hadamard
matrix, whereas all our sequences will only use four letters. Having
said that, it would be possible to use more letters rather than
consider multiple recursion steps, so the relation between these two
approaches still needs to be analysed in more detail.

\section{Spectral properties}

Since we have Rudin's original argument at our disposal, we know that
this new binary sequence satisfies the root $N$ property
\eqref{equation:0}, and hence has (in the balanced weight case with
weights $\pm 1$) purely absolutely continuous
diffraction. Nevertheless, we would like to use the remainder of this
paper to apply Bartlett's algorithm \cite{AB14} to our new
(four-letter substitution) sequence, and to verify that (in the
balanced weight case) it indeed has absolutely continuous diffraction
spectrum only. Due to space constraints, we cannot introduce all
quantities here, and instead refer to \cite{AB14} for definitions and
further details.

\begin{thm}
  The balanced weight sequence derived from the
  substitution rule\/ $\sigma$ of Eq.~\eqref{equation:3} has purely
  absolutely continuous diffraction spectrum.
\end{thm}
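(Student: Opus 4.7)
The plan is to apply Bartlett's algorithm from \cite{AB14} to the primitive constant-length substitution $\sigma$ of Eq.~\eqref{equation:3}, giving an independent spectral verification of the absolute continuity already implied by the root $N$ bound and \cite[Prop.~4.9]{MQ10}. First I would check that $\sigma$ is primitive: its incidence matrix $M_{\sigma}$ has constant column sum $4$, and a direct computation of $\sigma^{2}(A)=ABDBABACDCACABAC$, together with the analogous expansions for $B$, $C$, $D$, exhibits every letter in every $\sigma^{2}(i)$, so $M_{\sigma}^{2}$ is strictly positive. Consequently $(X_{\sigma},S)$ is uniquely ergodic; the involution $A\leftrightarrow D$, $B\leftrightarrow C$ commutes with $\sigma$, so all four letter frequencies are equal to $1/4$.

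Next I would introduce the pair correlation coefficients
\[
   \eta_{ij}(m)\, =\, \lim_{N\to\infty}\,
   \frac{1}{N}\,\#\bigl\{1\le k\le N:\, u_{k}=i,\; u_{k+m}=j\bigr\}
\]
along a fixed bi-infinite fixed point of $\sigma$, and use the exactness of $\sigma$ to write each $\eta_{ij}(4m+r)$ with $r\in\{0,1,2,3\}$ as an explicit linear combination of the $\eta_{kl}(m)$ and $\eta_{kl}(m\pm 1)$. This is done by listing the positions of every four-letter block $\sigma(k)\sigma(l)$ at which a given pair $(i,j)$ occurs at the prescribed offset. Pushing these linear relations through the balanced weight $\varphi\!:\{A,B\}\mapsto 1,\;\{C,D\}\mapsto -1$, the $A\leftrightarrow D$, $B\leftrightarrow C$ symmetry collapses the a~priori $16$-dimensional system to a small set of renormalization matrices $\B_{r}$, $r\in\{0,1,2,3\}$, governing the weighted pair correlation $\eta(m)=\sum_{i,j}\varphi(i)\varphi(j)\,\eta_{ij}(m)$.

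By Bartlett's main theorem in \cite{AB14}, the balanced-weight diffraction is purely absolutely continuous as soon as the Lyapunov exponent of the cocycle $\B_{r_{1}}\B_{r_{2}}\cdots$ on the invariant complement of the trivial Perron direction is strictly smaller than $\tfrac{1}{2}\log 4=\log 2$. Since the classical Rudin--Shapiro sequence already satisfies the stronger statement $\eta(m)=0$ for all $m\neq 0$ \cite{BG09}, and Eq.~\eqref{equation:2} only flips signs at each recursion step without altering the underlying parallelogram-law cancellation, I expect the $\B_{r}$ computed above to enjoy the same property: $\eta$ concentrates at the origin and its Fourier transform is a multiple of Lebesgue measure, which immediately gives the claim. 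The main obstacle is the purely combinatorial bookkeeping needed to assemble the $\B_{r}$ from all sixteen ordered letter pairs and four residue classes; once these matrices are written down, the symmetry reduction keeps the remaining spectral verification very short.
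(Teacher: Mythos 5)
Your overall strategy---verifying the spectrum with Bartlett's correlation-renormalisation machinery for constant-length substitutions---is indeed the route the paper takes, and your primitivity check and the observation that the $A\leftrightarrow D$, $B\leftrightarrow C$ symmetry forces equal letter frequencies $1/4$ both match the paper's setup. However, there is a genuine gap at the decisive step. Your argument culminates in ``I expect the $\B_{r}$ computed above to enjoy the same property'' as in the classical Rudin--Shapiro case, on the grounds that Eq.~\eqref{equation:2} ``only flips signs.'' That is precisely the claim that has to be \emph{verified}, not inferred by analogy: the sign flips produce a genuinely different four-letter substitution (of length four rather than two), and nothing in the analogy guarantees that the weighted correlations still vanish off the origin. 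A proof must actually write down the renormalisation data and check the vanishing. The paper does this concretely: it computes the correlation vectors $\widehat{\Sigma}(0),\dots,\widehat{\Sigma}(4)$ from the instruction matrices $R_{0},\dots,R_{3}$ via \cite[Thm.~4.3]{AB14}, establishes the mod-$4$ periodicity \eqref{equation:4}, determines the ergodic classes $E_{1}=\{AA,BB,CC,DD\}$ and $E_{2}=\{AD,DA,BC,CB\}$ of the bi-substitution $\sigma\otimes\sigma$, extracts the two extreme rays $v_{1},v_{2}$ of the cone of strongly semi-positive matrices, and then \emph{checks} that $\widehat{\lambda_{v_{2}}}(k)=0$ for all $k\neq 0$ while the pure point ray $\lambda_{v_{1}}=\delta_{0}$ is extinguished by the balanced weights. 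Your proposal stops exactly where this computation should begin.

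A secondary but substantive issue is your appeal to ``Bartlett's main theorem'' in the form of a Lyapunov-exponent criterion ($\chi<\tfrac{1}{2}\log 4$ on the complement of the Perron direction). That is not the criterion in \cite{AB14}: Bartlett's algorithm, following Queff\'{e}lec, decomposes the correlation measures over the ergodic classes of the bi-substitution and identifies the spectral type of each extreme ray directly; the Lyapunov-exponent bound you describe belongs to a different (spectral-cocycle) framework and, as stated, is not obviously sufficient for \emph{pure} absolute continuity. You also omit the aperiodicity check (the paper uses Pansiot's lemma, \cite[Lem.~3.6]{AB14}, via the two distinct neighbourhoods of $A$ in $\sigma^{2}(A)$), which is needed before the spectral classification applies. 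Finally, note that if you only wanted absolute continuity of the diffraction, the root-$N$ property together with \cite[Prop.~4.9]{MQ10} already suffices, as you acknowledge; but then the Bartlett computation is doing no work in your write-up, and the part that would make it an independent proof is missing.
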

\begin{proof}
  The four instruction matrices $R_{i}$ and the substitution matrix
  $M$ can be read off of the substitution rule
  \eqref{equation:3}. They are given by
\[
R_0=
\begin{pmatrix}
1 & 1 & 0 & 0\\
0 & 0 & 0 & 0\\
0 & 0 & 0 & 0\\
0 & 0 & 1 & 1
\end{pmatrix},\quad
R_1=
\begin{pmatrix}
0 & 0 & 0 & 0\\
1 & 1 &0 &0\\
0& 0& 1&1\\
0& 0&0& 0
\end{pmatrix}, \quad
R_2=
\begin{pmatrix}
0& 1 & 0 &1 \\
0& 0&0&0\\
0&0&0&0\\
1&0&1&0
\end{pmatrix},\quad
R_3=
\begin{pmatrix}
0&0&0&0\\
1&0&1&0\\
0&1&0&1\\
0&0&0&0
\end{pmatrix},
\] 
with $M=R_{0}+R_{1}+R_{2}+R_{3}$. As $M^{2}$ has positive entries
only, the substitution is primitive. The second iterate of the seed
$A$ obtained in Eq.~\eqref{equation:3b} shows that the letter $A$ can
be preceded by either $B$ or $C$. Hence $A$ has two distinct
neighbourhoods and, by Pansiot's lemma~\cite[Lem.~3.6]{AB14}, the
substitution is aperiodic.

Since we are dealing with a constant-length substitution, the leading
(Perron--Frobenius) eigenvalue \cite[Thm.~2.2]{BG13} of $M$ is
$\lambda_{\textnormal{PF}}=4$ with (statistically normalised)
eigenvector and $u=\frac{1}{4}(1,1,1,1)$.  By
applying~\cite[Thm.~4.3]{AB14}, we obtain
$\widehat{\Sigma}(0)=\frac{1}{4}\sum_{\alpha}e_{\alpha\alpha}$, where
$e_{\alpha\alpha}$ denotes the unit vector corresponding to the word
$\alpha\alpha$. As $\sigma$ is a length four substitution, we have
$\Delta_{1}(1)=\{3\}$. Using~\cite[Thm.~4.3]{AB14}, we find
\[
  \widehat{\Sigma}(1)\,=\,
  \frac{1}{8}\left(0,1,1,0,1,0,0,1,1,0,0,1,0,1,1,0\right)\,=\,
  \widehat{\Sigma}(3),
\]
and 
\[
  \widehat{\Sigma}(2)\,=\,
  \frac{1}{8}\left(1,0,0,1,0,1,1,0,0,1,1,0,1,0,0,1\right)\,=\,
  \widehat{\Sigma}(4).
\]
One can then verify the following recursive relations
\begin{equation}
\label{equation:4}
   \widehat{\Sigma}(4n)\,=\,\widehat{\Sigma}(4),\quad 
   \widehat{\Sigma}(4n+1)\,=\,\widehat{\Sigma}(1),\quad 
   \widehat{\Sigma}(4n+2)\,=\,\widehat{\Sigma}(2), \quad
   \widehat{\Sigma}(4n+3)\,=\,\widehat{\Sigma}(3). 
\end{equation}
Using~\cite[Prop.~2.2]{AB14}, we calculate the ergodic decomposition
of the bi-substitution $\sigma\otimes\sigma$, and obtain
$E_{1}=\{AA,BB,CC,DD\}$ and $E_{2}=\{AD,DA,BC, CB\}$ as the two
ergodic classes and $T=\{AB,AC,BA,BD,CA,CD,DB,DC\}$ as the
transient part. From~\cite[Lem.~4.7]{AB14}, it then follows that 
\[
  v\, =\, w^{}_1
  \begin{pmatrix}
  1&0&0&0\\
  0&1&0&0\\
  0&0&1&0\\
  0&0&0&1\end{pmatrix}+w^{}_2
  \begin{pmatrix}
  0&0&0&1\\
  0&0&1&0\\
  0&1&0&0\\
  1&0&0&0
  \end{pmatrix}+\frac{w^{}_1+w^{}_2}{2}
  \begin{pmatrix}
  0&1&1&0\\
  1&0&0&1\\
  1&0&0&1\\
  0&1&1&0
  \end{pmatrix}.
\]
Diagonalising the matrix $v$, we obtain 
\[
  v^{}_{d}=
  \begin{pmatrix}
  2(w^{}_1+w^{}_2)&0&0&0\\
  0&w^{}_1-w^{}_2&0&0\\
  0&0&w^{}_1-w^{}_2&0\\
  0&0&0&0
  \end{pmatrix}.
\]
Setting $w^{}_1=1$, strong semi-positivity is equivalent to $w^{}_2$
satisfying $-1\leq w^{}_2\leq 1$. The extreme points are then given by
$(w^{}_1,w^{}_2)=(1,1)$ and $(w^{}_1,w^{}_2)=(1,-1)$. Thus the extreme
rays are
\[
  \begin{split}
  v_1=&(1,1,1,1,1,1,1,1,1,1,1,1,1,1,1,1),\\
  v_2=&(1,0,0,-1,0,1,-1,0,0,-1,1,0,-1,0,0,1).
  \end{split}
\]
As usual, $\lambda_{v^{}_1}=\delta^{}_{0}$, which gives rise to the
pure point component of the spectrum. Using Eq.~\eqref{equation:4},
one checks that $\widehat{{\lambda}_{v^{}_2}}(k)=0$ for all $k\neq 0$,
which gives us the absolutely continuous component. Thus, we have a
purely absolute continuous spectrum in the balanced weight case, in
which the pure point component is extinguished.
\end{proof}

\ack The authors would like to thank Ian Short for many helpful
comments. L.C.\ would like to thank the organising
committee for a young scientist award to attend ICQ13 in Kathmandu.

\section*{References}

\end{document}